\documentclass[12pt,twoside,reqno]{amsart}
\usepackage[colorlinks=false,citecolor=blue]{hyperref}
\usepackage{mathptmx, amsmath, amssymb, amsfonts, amsthm, mathptmx, enumerate, color,mathrsfs}
\usepackage{graphicx}
\newcommand{\vertiii}[1]{{\left\vert\kern-0.25ex\left\vert\kern-0.25ex\left\vert #1 
    \right\vert\kern-0.25ex\right\vert\kern-0.25ex\right\vert}}

\usepackage{epstopdf}
\newtheorem{theorem}{Theorem}[section]
\newtheorem{lemma}{Lemma}[section]

\newtheorem{corollary}{Corollary}[section]

\newtheorem{proposition}{Proposition}[section]
\numberwithin{equation}{section}

\begin{document}
\title{Integral Numerical Radius  and Operator Matrix Bounds}
\author{Shiva Sheybani, Hamid Reza Moradi, and Mohammad Sababheh}
\subjclass[2010]{Primary 47A30; Secondary 47A12, 47A63, 15A60}

\keywords{Numerical radius, operator matrix, real part, imaginary part, triangle inequality}

\begin{abstract}
We establish new integral inequalities for the numerical radius and the operator norm of bounded linear operators on Hilbert spaces. Our results refine classical triangle-type and operator matrix inequalities by incorporating convex combinations and integral averaging techniques. Several consequences, including new identities, sharper bounds, and equality conditions, are obtained, revealing deeper structural connections between the numerical radius and operator norm.
 
\end{abstract}
\maketitle
%------------------------------------------------------------------------------------%
\pagestyle{myheadings}
\markboth{\centerline{}}
{\centerline{}}
\bigskip
\bigskip
%------------------------------------------------------------------------------------%
\section{Introduction}

The numerical radius is one of the most important quantities associated with bounded linear operators on Hilbert spaces. Let $\mathbb{B}(\mathbb{H})$ denote the $C^*$- algebra of all bounded linear operators on a complex Hilbert space $\mathbb{H}$. For $A\in\mathbb{B}(\mathbb{H})$, the \emph{numerical radius} of $A$ is defined by
\[
\omega(A)=\sup_{\|x\|=1}|\langle Ax,x\rangle|,
\]
while the usual \emph{operator norm} is given by
\[
\|A\|=\sup_{\|x\|=1}\|Ax\|.
\]
The numerical radius defines a norm on $\mathbb{B}(\mathbb{H})$ that is equivalent to the operator norm. The classical inequalities \cite[Theorem 1.3-1]{Gustafson_Book_1997}
\[
\frac{1}{2}\|A\|\le \omega(A)\le \|A\|
\]
hold for all $A\in\mathbb{B}(\mathbb{H})$, and they are sharp. In particular, if $A$ is normal, then $\omega(A)=\|A\|$. These relations make the numerical radius a fundamental tool in operator theory, matrix analysis, and applications where quadratic forms naturally arise.

Because of its geometric and spectral significance, the numerical radius has been studied extensively. Numerous refinements and improvements of inequalities involving $\omega(\cdot)$ and the operator norm have appeared in the literature. Researchers have investigated additive, multiplicative, and mixed inequalities, as well as bounds involving operator matrices, real and imaginary parts, and unitarily invariant norms. Such refinements often lead to sharper estimates, deeper structural insight, and improved tools for applications in perturbation theory, numerical linear algebra, and quantum information theory. The reader is referred to \cite {Abu-Omar_Rocky_2015,Abu-Omar_LAA_2015,Hirzallah_IEOT_2011,Hirzallah_MathScandin_2014,Hirzallah_NFAO_2011,Kittaneh_Studia_2003,Kittaneh_Studia_2005,Kittaneh_LAA_2015,Moradi_LAMA_2021,Moslehian_MathScand_2017,Sababheh_LAMA_2021,Sattari_LAA_2015} to mention a few.

In recent years, a growing body of research has focused on refining classical inequalities via integral representations and operator-matrix techniques \cite{Moradi_LAMA_2021, Sababheh_LAMA_2021}. These approaches allow one to bridge the gap between norm estimates and numerical radius bounds while capturing intermediate operator behavior. In particular, inequalities derived from convex combinations and integral averaging have proved effective in producing tighter bounds than classical triangle-type estimates.

Motivated by these developments, the present paper establishes new integral inequalities for the numerical radius and operator norm that refine known bounds and reveal new relationships between them. Our approach is inspired by the scalar inequality \cite[Theorem 2.1]{1}
\begin{equation}\label{1}
|a+b|\le 2\int_0^1 |(1-t)a+tb|\,dt \le |a|+|b|, a,b\in \mathbb{C},
\end{equation}
and extends it to operators through numerical radius techniques.

We first prove an integral refinement of the numerical radius triangle inequality
\[
\omega(A+B)\le 2\int_0^1 \omega((1-t)A+tB)\,dt \le \omega(A)+\omega(B),
\]
which provides a continuous interpolation between $\omega(A+B)$ and the classical bound $\omega(A)+\omega(B)$. As a consequence, we derive bounds involving the real and imaginary parts of operators and obtain estimates expressed in terms of operator matrices.

Using block operator techniques, we establish inequalities of the form
\[
\|A+B\|\le 2\int_0^1
\omega\!\left(
\begin{bmatrix}
0 & (1-t)A+tB\\
tA^*+(1-t)B^* & 0
\end{bmatrix}
\right)dt,
\]
which connect the norm of sums to numerical radii of associated operator matrices and yield refined bounds that recover classical estimates as special cases.

Our main result provides a refined inequality combining numerical radius and operator norm estimates:
\[
\begin{aligned}
  & \left\| A+B \right\|
  +\left| \int_{0}^{1}\left(
  \omega \left(
  \begin{matrix}
   O & (1-t)A+tB  \\
   tA^{*}+(1-t)B^{*} & O
  \end{matrix}
  \right)
  -\left\| (1-t)A+tB \right\|
  \right)dt \right| \\
 & \le \int_{0}^{1}\left(
 \omega \left(
 \begin{matrix}
   O & (1-t)A+tB  \\
   tA^{*}+(1-t)B^{*} & O
 \end{matrix}
 \right)
 +\left\| (1-t)A+tB \right\|
 \right)dt \\
 & \le \left\| A \right\|+\left\| B \right\|,
\end{aligned}
\]
which yields sharper control than classical norm inequalities. In this context, if $S\in\mathbb{B}(\mathbb{H}), S^*$ is the adjoint of $S$. As an application, we obtain the refined bound
{\small
\[\frac{1}{2}\left\| A \right\|+\frac{1}{2}\left| \int\limits_{0}^{1}{\left( \omega \left( \begin{matrix}
   O & \left( 1-t \right)\Re A+it\Im A  \\
   t\Re A+i\left( 1-t \right){\Im A} & O  \\
\end{matrix} \right)-\left\| \left( 1-t \right)\Re A+it\Im A \right\| \right)dt} \right|\le \omega \left( A \right),\]
}
providing a new lower estimate for the numerical radius in terms of integral averages of operator matrices. Here $\Re A$ and $\Im A$ refer to the real and imaginary parts of $A$, respectively. The obtained bounds sharpen classical estimates, unify several known inequalities, and provide new tools for further developments in operator theory and matrix analysis.

\section{Main results}

The numerical radius satisfies the classical triangle inequality
\[
\omega(A+B)\le \omega(A)+\omega(B),
\]
which provides a basic estimate for sums of operators. However, this bound does not capture how the operators interact along intermediate convex combinations. Inspired by integral refinements of the scalar triangle inequality, it is natural to investigate whether averaging the numerical radius over convex combinations of $A$ and $B$ can yield a sharper and more informative bound.

The next result provides an integral refinement of the numerical radius triangle inequality. It interpolates between $\omega(A+B)$ and $\omega(A)+\omega(B)$, providing a continuous measure of the interaction between $A$ and $B$ and yielding tighter estimates that will be useful in subsequent developments.

\begin{theorem}\label{6}
Let $A,B\in \mathbb B\left( \mathbb H \right)$. Then
\[\omega \left( A+B \right)\le 2\int\limits_{0}^{1}{\omega \left( \left( 1-t \right)A+tB \right)dt}\le \omega \left( A \right)+\omega \left( B \right).\]
\end{theorem}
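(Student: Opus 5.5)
The plan is to deduce both inequalities from two properties of the numerical radius: it is a norm on $\mathbb B(\mathbb H)$, and the function $f(t)=\omega((1-t)A+tB)$ is convex and continuous on $[0,1]$. We do not need the scalar inequality \eqref{1} pointwise. Only these norm properties are used.

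For the right-hand inequality, we use the triangle inequality and homogeneity of $\omega$. For each $t\in[0,1]$,
\[
\omega((1-t)A+tB)\le (1-t)\omega(A)+t\,\omega(B).
\]
Integrating over $[0,1]$ gives $\int_0^1\omega((1-t)A+tB)\,dt\le \tfrac12\omega(A)+\tfrac12\omega(B)$. Multiplying by $2$ yields the claim. Integrability is not an issue, because $f$ is continuous: it is Lipschitz, since $|f(t)-f(s)|\le |t-s|\,\omega(B-A)$.

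For the left-hand inequality, we use the symmetry $t\mapsto 1-t$. The substitution $s=1-t$ gives
\[
\int_0^1 \omega((1-t)A+tB)\,dt=\int_0^1\omega(tA+(1-t)B)\,dt.
\]
Hence
\[
2\int_0^1 f(t)\,dt=\int_0^1\big[\omega((1-t)A+tB)+\omega(tA+(1-t)B)\big]dt.
\]
For each $t$, the triangle inequality applied to the two operators in the integrand gives
\[
\omega((1-t)A+tB)+\omega(tA+(1-t)B)\ge \omega\big((1-t)A+tB+tA+(1-t)B\big)=\omega(A+B).
\]
Integrating this over $[0,1]$ gives $\omega(A+B)\le 2\int_0^1 f(t)\,dt$. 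This is the Hermite–Hadamard lower bound $f(1/2)\le\int_0^1 f$, written out directly.

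No step presents a real obstacle. The only point needing care is recognising the symmetrization trick in the left inequality, namely pairing $t$ with $1-t$ so that the two convex combinations sum to $A+B$. The rest is the norm axioms for $\omega$ and the continuity of $f$, which justifies the integrals.
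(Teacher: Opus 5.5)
Your proof is correct. The right-hand inequality is handled exactly as in the paper: triangle inequality and homogeneity of $\omega$, then integrate.

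For the left-hand inequality you take a genuinely different route. You symmetrize $t\leftrightarrow 1-t$ and apply the triangle inequality for $\omega$ directly to the pair $(1-t)A+tB$, $tA+(1-t)B$. In other words, you prove the Hermite--Hadamard bound $f(1/2)\le\int_0^1 f$ at the operator level. The paper instead works vector by vector. It inserts $a=\langle Ax,x\rangle$ and $b=\langle Bx,x\rangle$ into the scalar inequality \eqref{1} to get \eqref{2}. It then bounds each $|\langle((1-t)A+tB)x,x\rangle|$ by $\omega((1-t)A+tB)$ and takes the supremum over unit vectors.

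Your version has three advantages. It is self-contained, with no appeal to the cited scalar result. It uses only that $\omega$ is a norm, so it works verbatim for any norm on $\mathbb B(\mathbb H)$; this is exactly the reasoning behind the paper's later Proposition for $\|\cdot\|$. And it makes explicit the continuity of $f$, which the paper leaves tacit.

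The paper's vector-level route, in turn, keeps the supremum outside the integral. It therefore passes through a finer intermediate quantity:
\[
\omega(A+B)\le 2\sup_{\|x\|=1}\int_0^1\left|\left\langle((1-t)A+tB)x,x\right\rangle\right|dt\le 2\int_0^1\omega((1-t)A+tB)\,dt .
\]
The middle term can be strictly smaller than the right-hand side. For $A=\mathrm{diag}(1,0)$ and $B=\mathrm{diag}(0,1)$ it equals $1$, while the right-hand side is $3/2$. Your operator-level argument does not capture this refinement.
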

\begin{proof}
Let $x\in\mathbb{H}$, and put $a=\left\langle Ax,x \right\rangle $ and $b=\left\langle Bx,x \right\rangle $ in \eqref{1} to get
	\[\begin{aligned}
   \left| \left\langle \left( A+B \right)x,x \right\rangle  \right|&=\left| \left\langle Ax,x \right\rangle +\left\langle Bx,x \right\rangle  \right| \\ 
 & \le 2\int\limits_{0}^{1}{\left| \left( 1-t \right)\left\langle Ax,x \right\rangle +t\left\langle Bx,x \right\rangle  \right|dt} \\ 
 & =2\int\limits_{0}^{1}{\left| \left\langle \left( \left( 1-t \right)A+tB \right)x,x \right\rangle  \right|dt} \\ 
 & \le \left| \left\langle Ax,x \right\rangle  \right|+\left| \left\langle Bx,x \right\rangle  \right|. 
\end{aligned}\]
That is, if $x\in\mathbb{H}$, then
\begin{equation}\label{2}
\left| \left\langle \left( A+B \right)x,x \right\rangle  \right|\le 2\int\limits_{0}^{1}{\left| \left\langle \left( \left( 1-t \right)A+tB \right)x,x \right\rangle  \right|dt}\le \left| \left\langle Ax,x \right\rangle  \right|+\left| \left\langle Bx,x \right\rangle  \right|.
\end{equation}
After taking the supremum over all unit vectors $x\in \mathbb H$, in \eqref{2}, we obtain
	\[\omega \left( A+B \right)\le 2\int\limits_{0}^{1}{\omega \left( \left( 1-t \right)A+tB \right)dt}.\]
On the other hand, since 
	\[\omega \left( \left( 1-t \right)A+tB \right)\le \left( 1-t \right)\omega \left( A \right)+t\omega \left( B \right),\]
we get after taking the integral over $0\le t\le 1$, 
	\[\int\limits_{0}^{1}{\omega \left( \left( 1-t \right)A+tB \right)dt}\le \frac{\omega \left( A \right)+\omega \left( B \right)}{2}.\]
Indeed, we have shown that
\[\omega \left( A+B \right)\le 2\int\limits_{0}^{1}{\omega \left( \left( 1-t \right)A+tB \right)dt}\le \omega \left( A \right)+\omega \left( B \right),\]
as required.
\end{proof}
%%%%%%%%%%%%%%%%%%%%%%%%%%%%%%%%%%%%%%%%%%%%%%%%%%%%%%%%%%%%%%%%%%%%%%%%%%%%%%%%%%%%%%%%%%%%%%%%%%%%%%%%%%%%%%%%%%%%%%%%%%%%%%%%%%%%%%%%%%%%%%%%%%%%%%%%%%%%%%%%%%%%%%%%%%%%%%%%%%%%%%%%%%%%%%%%%%%%%%%%%%%%%%%%%%%%%%%%%%%%%%%%%%%%%%%%%%%%%%%%%%%%%%%%%%%%%%%%%%%%%%%%%%%%%%%%%%%%%%%%%%%%%%%%%%%%%%%%%%%%%%%%%%%%%%%%%%%%%%%%%%%%%%%%%%%%%%%%%%%%%%%%%%%%%%%%%%%%%%%%%%%%%%%%%%%%%%%%%%%%%%%%%%%%%%%%%%%%%%%%%%%%%%%%%%%%%%%%%%%%%%%%%%%%%%%%%%%%%%%%%%%%%%%%%%%%%%%%%%%%%%%%%%%%%%%%%%%%%%%%%%%%%%%%%%%%%%%%%%%%%%%%%%%%%%%%%%%%%%%%%%%%%%%%%%%%%%%%%%%%%%%%%%%%%%%%%%%%%%%%%%%%%%%%%%%%%%%%%%%%%%%%%%%%%%%%%%%%%%%%%%%%%%%%%%%%%%%%%%%%%%%%%%%%

It is well known that the numerical radius controls the real part of an operator through the inequality
\[
\|\Re A\|\le \omega(A),
\]
which follows from the definition of the numerical radius. Although this estimate is fundamental, it does not reveal how $A$ and $A^*$ interact through intermediate combinations. By applying the integral refinement of Theorem \ref{6} to the pair $(A,A^*)$, we obtain the following result, which refines the classical bound by inserting an averaged numerical radius term between $\|\Re A\|$ and $\omega(A)$.

\begin{corollary}\label{3}
Let $A\in \mathbb B\left( \mathbb H \right)$. Then
\[\left\| \Re A \right\|\le \int\limits_{0}^{1}{\omega \left( \left( 1-t \right)A+t{{A}^{*}} \right)dt}\le \omega \left( A \right).\]
\end{corollary}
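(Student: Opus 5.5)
The plan is to apply Theorem \ref{6} to the pair $(A,A^*)$, taking $B=A^*$, and then simplify each of the three quantities that appear. This gives
\[
\omega\left(A+A^*\right)\le 2\int_0^1 \omega\left((1-t)A+tA^*\right)dt\le \omega(A)+\omega(A^*).
\]
After this, two routine identifications remain.

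\textbf{Left end.} Recall that $\Re A=\frac{A+A^*}{2}$ is self-adjoint. For a self-adjoint (indeed, any normal) operator the numerical radius equals the norm, as noted in the introduction. Hence
\[
\omega(A+A^*)=2\omega(\Re A)=2\|\Re A\|.
\]

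\textbf{Right end.} Since $\langle A^*x,x\rangle=\overline{\langle Ax,x\rangle}$ for every unit vector $x$, taking suprema gives $\omega(A^*)=\omega(A)$. The right-hand side therefore equals $2\omega(A)$.

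\textbf{Conclusion.} Dividing the whole chain by $2$ yields exactly
\[
\|\Re A\|\le\int_0^1\omega\left((1-t)A+tA^*\right)dt\le\omega(A).
\]

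There is no real obstacle here. The only point needing care is measurability or integrability of $t\mapsto \omega((1-t)A+tA^*)$, and this is already implicit in Theorem \ref{6}. In any case the function is continuous, in fact Lipschitz, since
\[
|\omega(X)-\omega(Y)|\le\omega(X-Y)\le\|X-Y\|.
\]
The substantive step is just recognizing that $\omega$ agrees with $\|\cdot\|$ on the self-adjoint operator $\Re A$.
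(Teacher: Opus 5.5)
Your proposal is correct and is the same argument as the paper's: apply Theorem~\ref{6} with $B=A^*$, then use $\omega(A+A^*)=2\|\Re A\|$ (since $\Re A$ is self-adjoint) and $\omega(A^*)=\omega(A)$. Your remark that the integrand is Lipschitz in $t$, so the integral is well defined, is a small addition that the paper leaves implicit.
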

\begin{proof}
Applying Theorem \ref{6}, we have
\[\begin{aligned}
   2\left\| \Re A \right\|&=\omega \left( A+{{A}^{*}} \right) \\ 
 & \le 2\int\limits_{0}^{1}{\omega \left( \left( 1-t \right)A+t{{A}^{*}} \right)dt} \\ 
 & \le \omega \left( A \right)+\omega \left( {{A}^{*}} \right) \\ 
 & =2\omega \left( A \right),  
\end{aligned}\]
as required.
\end{proof}
%%%%%%%%%%%%%%%%%%%%%%%%%%%%%%%%%%%%%%%%%%%%%%%%%%%%%%%%%%%%%%%%%%%%%%%%%%%%%%%%%%%%%%%%%%%%%%%%%%%%%%%%%%%%%%%%%%%%%%%%%%%%%%%%%%%%%%%%%%%%%%%%%%%%%%%%%%%%%%%%%%%%%%%%%%%%%%%%%%%%%%%%%%%%%%%%%%%%%%%%%%%%%%%%%%%%%%%%%%%%%%%%%%%%%%%%%%%%%%%%%%%%%%%%%%%%%%%%%%%%%%%%%%%%%%%%%%%%%%%%%%%%%%%%%%%%%%%%%%%%%%%%%%%%%%%%%%%%%%%%%%%%%%%%%%%%%%%%%%%%%%%%%%%%%%%%%%%%%%%%%%%%%%%%%%%%%%%%%%%%%%%%%%%%%%%%%%%%%%%%%%%%%%%%%%%%%%%%%%%%%%%%%%%%%%%%%%%%%%%%%%%%%%%%%%%%%%%%%%%%%%%%%%%%%%%%%%%%%%%%%%%%%%%%%%%%%%%%%%%%%%%%%%%%%%%%%%%%%%%%%%%%%%%%%%%%%%%%%%%%%%%%%%%%%%%%%%%%%%%%%%%%%%%%%%%%%%%%%%%%%%%%%%%%%%%%%%%%%%%%%%%%%%%%%%%%%%%%%%%%%%%%%%%%

It is known that the norm of a sum of operators can be estimated via operator matrices by \cite{Hirzallah_IEOT_2011}
\[
\|S+T\|\le 2\,\omega\!\left(
\begin{bmatrix}
O & S\\
T^{*} & O
\end{bmatrix}
\right),
\]
which connects the operator norm to the numerical radius of an associated block operator matrix. Although this bound is effective, it does not capture intermediate interactions between $S$ and $T$. By incorporating an integral average over convex combinations of $S$ and $T$, the following theorem refines this estimate by inserting a tighter intermediate bound between $\|S+T\|$ and the classical operator matrix bound.

\begin{theorem}\label{Thm_S_T_Main}
Let $S,T\in \mathbb B\left( \mathbb H \right)$. Then \[\left\| S+T \right\|\le 2\int\limits_{0}^{1}{\omega \left( \left[ \begin{matrix} O & \left( 1-t \right)S+tT \\ t{{S}^{*}}+\left( 1-t \right){{T}^{*}} & O \\ \end{matrix} \right] \right)dt}\le 2\omega \left( \left[ \begin{matrix} O & S \\ {{T}^{*}} & O \\ \end{matrix} \right] \right).\]
\end{theorem}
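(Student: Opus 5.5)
The plan is to reduce everything to Theorem \ref{6}, applied on $\mathbb H\oplus\mathbb H$ to two operator matrices whose sum is the block matrix in the classical bound. Put
\[
X=\begin{bmatrix} O & S\\ T^{*} & O\end{bmatrix},\qquad Y=\begin{bmatrix} O & T\\ S^{*} & O\end{bmatrix}.
\]
Then $(1-t)X+tY$ has upper-right entry $(1-t)S+tT$ and lower-right entry $(1-t)T^*+tS^*$. This is exactly the operator matrix appearing in the integrand of the statement.

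For the first inequality, I would start from the known bound $\|S+T\|\le 2\omega(X)$ quoted just before the theorem. A cleaner route is to note that $X+Y=\begin{bmatrix} O & S+T\\ (S+T)^{*} & O\end{bmatrix}$ is self-adjoint, and its norm equals $\|S+T\|$. Hence $\omega(X+Y)=\|S+T\|$. The left inequality of Theorem \ref{6}, applied to $X,Y$, then gives
\[
\|S+T\|=\omega(X+Y)\le 2\int_0^1\omega\big((1-t)X+tY\big)\,dt,
\]
which is the first claimed inequality.

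For the second inequality, the right half of Theorem \ref{6} gives $2\int_0^1\omega((1-t)X+tY)\,dt\le \omega(X)+\omega(Y)$. It therefore suffices to show $\omega(Y)=\omega(X)$. I would establish this using the unitary $U=\begin{bmatrix} O & I\\ I & O\end{bmatrix}$ together with the adjoint:
\[
U X U^{*}=\begin{bmatrix} O & T^{*}\\ S & O\end{bmatrix}, \qquad \big(UXU^*\big)^{*}=\begin{bmatrix} O & S^{*}\\ T & O\end{bmatrix}.
\]
This last matrix is not yet $Y$, so I would conjugate instead by $\mathrm{diag}(I,-I)$ or switch to $Y^*$ directly. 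Note that $Y^*=\begin{bmatrix} O & S\\ T^* & O\end{bmatrix}=X$. Since $\omega(Y)=\omega(Y^*)$, this gives $\omega(Y)=\omega(X)$ immediately, with no swap unitary needed. The right-hand side is then $2\omega(X)$, as required.

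The main obstacle is spotting the correct pair $X,Y$ so that $X+Y$ becomes the self-adjoint dilation of $S+T$ and $Y=X^*$. Once that choice is made, both inequalities follow from Theorem \ref{6}. The remaining facts are routine: unitary invariance and adjoint invariance of $\omega$, and $\omega=\|\cdot\|$ for self-adjoint operators, together with $\left\|\begin{bmatrix} O & R\\ R^* & O\end{bmatrix}\right\|=\|R\|$.
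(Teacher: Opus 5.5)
Your proof is correct and is essentially the paper's own argument. The paper takes $A=X$, notes that $A^{*}=Y$ and $\tfrac12\|S+T\|=\|\Re A\|$, and then invokes Corollary~\ref{3}, which is exactly Theorem~\ref{6} applied to the pair $(X,X^{*})$ as you do.

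Two cosmetic points:
\begin{itemize}
\item The entry $(1-t)T^{*}+tS^{*}$ is the lower-left entry, not the lower-right one.
\item The detour through the swap unitary $U$ can be deleted, since $Y=X^{*}$ gives $\omega(Y)=\omega(X)$ immediately.
\end{itemize}
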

\begin{proof}
Let  $A=\left[ \begin{matrix}
   O & S  \\
   {{T}^{*}} & O  \\
\end{matrix} \right]\in\mathbb{B}(\mathbb H\oplus \mathbb H)$. By Corollary \ref{3}, we have
\[\begin{aligned}
   \frac{1}{2}\left\| S+T \right\|&=\frac{1}{2}\left\| \left[ \begin{matrix}
   O & S+T  \\
   {{S}^{*}}+{{T}^{*}} & O  \\
\end{matrix} \right] \right\| \\ 
 & =\frac{1}{2}\left\| \left[ \begin{matrix}
   O & S  \\
   {{T}^{*}} & O  \\
\end{matrix} \right]+\left[ \begin{matrix}
   O & T  \\
   {{S}^{*}} & O  \\
\end{matrix} \right] \right\| \\ 
 & =\left\| \Re A \right\| \\ 
 & \le \int\limits_{0}^{1}{\omega \left( \left( 1-t \right)A+t{{A}^{*}} \right)dt} \\ 
 & =\int\limits_{0}^{1}{\omega \left( \left( 1-t \right)\left[ \begin{matrix}
   O & S  \\
   {{T}^{*}} & O  \\
\end{matrix} \right]+t\left[ \begin{matrix}
   O & T  \\
   {{S}^{*}} & O  \\
\end{matrix} \right] \right)dt} \\ 
 & =\int\limits_{0}^{1}{\omega \left( \left[ \begin{matrix}
   O & \left( 1-t \right)S+tT  \\
   t{{S}^{*}}+\left( 1-t \right){{T}^{*}} & O  \\
\end{matrix} \right] \right)dt} \\ 
 & \le \omega \left( A \right) \\ 
 & =\omega \left( \left[ \begin{matrix}
   O & S  \\
   {{T}^{*}} & O  \\
\end{matrix} \right] \right),  
\end{aligned}\]
as required.
\end{proof}

%%%%%%%%%%%%%%%%%%%%%%%%%%%%%%%%%%%%%%%%%%%%%%%%%%%%%%%%%%%%%%%%%%%%%%%%%%%%%%%%%%%%%%%%%%%%%%%%%%%%%%%%%%%%%%%%%%%%%%%%%%%%%%%%%%%%%%%%%%%%%%%%%%%%%%%%%%%%%%%%%%%%%%%%%%%%%%%%%%%%%%%%%%%%%%%%%%%%%%%%%%%%%%%%%%%%%%%%%%%%%%%%%%%%%%%%%%%%%%%%%%%%%%%%%%%%%%%%%%%%%%%%%%%%%%%%%%%%%%%%%%%%%%%%%%%%%%%%%%%%%%%%%%%%%%%%%%%%%%%%%%%%%%%%%%%%%%%%%%%%%%%%%%%%%%%%%%%%%%%%%%%%%%%%%%%%%%%%%%%%%%%%%%%%%%%%%%%%%%%%%%%%%%%%%%%%%%%%%%%%%%%%%%%%%%%%%%%%%%%%%%%%%%%%%%%%%%%%%%%%%%%%%%%%%%%%%%%%%%%%%%%%%%%%%%%%%%%%%%%%%%%%%%%%%%%%%%%%%%%%%%%%%%%%%%%%%%%%%%%%%%%%%%%%%%%%%%%%%%%%%%%%%%%%%%%%%%%%%%%%%%%%%%%%%%%%%%%%%%%%%%%%%%%%%%%%%%%%%%%%%%%%%%%%
The numerical radius of an operator can be expressed in terms of block operator matrices, which often provide sharper insight than direct estimates. In particular, embedding $A$ into a $2\times2$ operator matrix allows one to relate $\omega(A)$ to numerical radii of off–diagonal operator structures. The following corollary shows that $\omega(A)$ admits an integral upper bound obtained from such block matrices, yielding a refined representation. Moreover, when $A$ is normal, the integral expression collapses to $\|A\|$, revealing an exact norm identity and highlighting the sharpness of the estimate.

\begin{corollary}\label{4}
Let $A\in \mathbb B\left( \mathbb H \right)$. Then
\[\omega \left( A \right)\le 2\int\limits_{0}^{1}{\omega \left( \left[ \begin{matrix}
   O & \left( 1-t \right)A  \\
   tA & O  \\
\end{matrix} \right] \right)dt}.\]
In particular, if $A$ is a normal operator, then
\[2\int\limits_{0}^{1}{\omega \left( \left[ \begin{matrix}
   O & \left( 1-t \right)A  \\
   tA & O  \\
\end{matrix} \right] \right)dt}=\left\| A \right\|.\]
\end{corollary}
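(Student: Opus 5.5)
The plan is to realize $\omega(A)$ as the numerical radius of a block operator that splits naturally as a sum, and then apply Theorem \ref{6}. Put
\[
C=\left[ \begin{matrix} O & A \\ A & O \end{matrix} \right],\qquad
P=\left[ \begin{matrix} O & A \\ O & O \end{matrix} \right],\qquad
Q=\left[ \begin{matrix} O & O \\ A & O \end{matrix} \right],
\]
so that $C=P+Q$ and, for every $t\in[0,1]$,
\[
(1-t)P+tQ=\left[ \begin{matrix} O & (1-t)A \\ tA & O \end{matrix} \right].
\]

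For the first inequality I use the standard fact $\omega(C)=\omega(A)$. This holds because the unitary $U=\frac{1}{\sqrt2}\left[ \begin{matrix} I & I \\ I & -I \end{matrix} \right]$ satisfies
\[
U^*CU=\left[ \begin{matrix} A & O \\ O & -A \end{matrix} \right],
\]
and this diagonal operator has numerical radius $\max\{\omega(A),\omega(-A)\}=\omega(A)$. Applying the first inequality of Theorem \ref{6} to the pair $(P,Q)$ then gives
\[
\omega(A)=\omega(P+Q)\le 2\int_0^1 \omega\big((1-t)P+tQ\big)\,dt ,
\]
which is exactly the claimed bound.

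For the normal case I need the reverse inequality. By the second inequality of Theorem \ref{6}, the integral is at most $\omega(P)+\omega(Q)$. I would compute $\omega(P)=\omega(Q)=\|A\|/2$ directly. For a unit vector $x=(x_1,x_2)$ we have
\[
|\langle Px,x\rangle|=|\langle Ax_2,x_1\rangle|\le \|A\|\,\|x_1\|\,\|x_2\|\le \tfrac12\|A\| ,
\]
and the same estimate holds for $Q$. (Equality $\omega(P)=\|A\|/2$ holds, but only the upper bound is needed.) Hence
\[
2\int_0^1 \omega\left(\left[ \begin{matrix} O & (1-t)A \\ tA & O \end{matrix} \right]\right)dt\le \|A\|
\]
for every $A\in\mathbb B(\mathbb H)$. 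If $A$ is normal, then $\omega(A)=\|A\|$, and combining this with the first inequality squeezes the integral expression to equal $\|A\|$.

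The only step that is not a direct citation is the identity $\omega(C)=\omega(A)$, that is, the unitary diagonalization of $\left[ \begin{matrix} O & A \\ A & O \end{matrix} \right]$. This is a routine block computation, so I expect no real obstacle.
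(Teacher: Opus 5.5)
Your proof is correct, and it takes a genuinely different route from the paper's. The paper replaces $A$ by $e^{i\theta}A$ in Corollary \ref{3} and bounds $\omega(\cdot)\le\|\cdot\|$ inside the integral. It then applies the Hirzallah--Kittaneh--Shebrawi estimate $\|S+T\|\le 2\,\omega\left(\left[\begin{matrix} O & S\\ T^{*} & O\end{matrix}\right]\right)$ pointwise in $t$, with $S=(1-t)e^{i\theta}A$ and $T=te^{-i\theta}A^{*}$; the unimodular factor $e^{i\theta}$ then drops out of the block matrix. Finally it takes the supremum over $\theta$ using $\omega(A)=\sup_{\theta}\|\Re(e^{i\theta}A)\|$. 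You instead recognize the corollary as Theorem \ref{6} applied in $\mathbb B(\mathbb H\oplus\mathbb H)$ to the two off-diagonal pieces $P,Q$ of $\left[\begin{matrix} O & A\\ A & O\end{matrix}\right]$. This needs only the unitary equivalence of that matrix with $A\oplus(-A)$. Your argument is shorter and avoids both the rotation argument and the external operator-matrix norm bound. It also has a real advantage for the second claim. The other half of Theorem \ref{6}, together with $\omega(P),\omega(Q)\le\frac12\|A\|$, gives $2\int_0^1\omega(\cdots)\,dt\le\|A\|$ for every $A$, which is exactly the reverse inequality the normal case needs. The paper's proof does not spell out the normal case at all; the needed bound is only implicit in the estimate $\omega\left(\left[\begin{matrix} O & X\\ Y & O\end{matrix}\right]\right)\le\frac12(\|X\|+\|Y\|)$ that appears later in the proof of Theorem \ref{10}. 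What the paper's route buys in exchange is a longer chain. It interposes $\sup_{\theta}\int_0^1\|(1-t)e^{i\theta}A+te^{-i\theta}A^{*}\|\,dt$ between $\omega(A)$ and the block-matrix integral. Its first step (Corollary \ref{3} applied to $e^{i\theta}A$) is also what the next corollary, the identity $\omega(A)=\sup_\theta\int_0^1\omega((1-t)e^{i\theta}A+te^{-i\theta}A^{*})\,dt$, relies on.
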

\begin{proof}
Replace $A$ by ${{e}^{i\theta }}A$, in Corollary \ref{3}, we get
\[\begin{aligned}
   \left\| \Re\left( {{e}^{i\theta }}A \right) \right\|&\le \int\limits_{0}^{1}{\omega \left( \left( 1-t \right){{e}^{i\theta }}A+t{{e}^{-i\theta }}{{A}^{*}} \right)dt} \\ 
 & \le \int\limits_{0}^{1}{\left\| \left( 1-t \right){{e}^{i\theta }}A+t{{e}^{-i\theta }}{{A}^{*}} \right\|dt} \\ 
 & \le \int\limits_{0}^{1}{\left\| \left( 1-t \right){{e}^{i\theta }}A+t{{e}^{-i\theta }}{{A}^{*}} \right\|dt} \\ 
 & \le 2\int\limits_{0}^{1}{\omega \left( \left[ \begin{matrix}
   O & \left( 1-t \right)A  \\
   tA & O  \\
\end{matrix} \right] \right)dt}.  
\end{aligned}\]
That is,
\[\left\| \Re\left( {{e}^{i\theta }}A \right) \right\|\le 2\int\limits_{0}^{1}{\omega \left( \left[ \begin{matrix}
   O & \left( 1-t \right)A  \\
   tA & O  \\
\end{matrix} \right] \right)dt}.\]
By taking the supremum over $\theta \in \mathbb{R}$, we deduce
\[\omega \left( A \right)\le 2\int\limits_{0}^{1}{\omega \left( \left[ \begin{matrix}
   O & \left( 1-t \right)A  \\
   tA & O  \\
\end{matrix} \right] \right)dt},\]
where we have used the well-known identity $\omega(A)=\sup_{\theta\in\mathbb{R}}\left\|\Re\left(e^{i\theta}A\right)\right\|.$ This completes the proof.
\end{proof}
%%%%%%%%%%%%%%%%%%%%%%%%%%%%%%%%%%%%%%%%%%%%%%%%%%%%%%%%%%%%%%%%%%%%%%%%%%%%%%%%%%%%%%%%%%%%%%%%%%%%%%%%%%%%%%%%%%%%%%%%%%%%%%%%%%%%%%%%%%%%%%%%%%%%%%%%%%%%%%%%%%%%%%%%%%%%%%%%%%%%%%%%%%%%%%%%%%%%%%%%%%%%%%%%%%%%%%%%%%%%%%%%%%%%%%%%%%%%%%%%%%%%%%%%%%%%%%%%%%%%%%%%%%%%%%%%%%%%%%%%%%%%%%%%%%%%%%%%%%%%%%%%%%%%%%%%%%%%%%%%%%%%%%%%%%%%%%%%%%%%%%%%%%%%%%%%%%%%%%%%%%%%%%%%%%%%%%%%%%%%%%%%%%%%%%%%%%%%%%%%%%%%%%%%%%%%%%%%%%%%%%%%%%%%%%%%%%%%%%%%%%%%%%%%%%%%%%%%%%%%%%%%%%%%%%%%%%%%%%%%%%%%%%%%%%%%%%%%%%%%%%%%%%%%%%%%%%%%%%%%%%%%%%%%%%%%%%%%%%%%%%%%%%%%%%%%%%%%%%%%%%%%%%%%%%%%%%%%%%%%%%%%%%%%%%%%%%%%%%%%%%%%%%%%%%%%%%%%%%%%%%%%%%%%
The numerical radius admits several characterizations in terms of the real parts of rotated operators, most notably
\(\omega(A)=\sup_{\theta\in\mathbb{R}}\|\Re(e^{i\theta}A)\|\).
The following result provides a new identity for the numerical radius, obtained via an integral averaging process involving $A$ and its adjoint. This formulation offers a fresh perspective by combining rotational invariance with convex interpolation between $A$ and $A^*$, thereby revealing additional structure behind the numerical radius. In the special case where $A$ is normal, the identity reduces to an exact representation of the operator norm, highlighting both the sharpness and the structural significance of the formula.
\begin{corollary}
Let $A\in \mathbb B\left( \mathbb H \right)$. Then
\[\omega \left( A \right)=\underset{\theta \in \mathbb{R}}{\mathop{\sup }}\,\int\limits_{0}^{1}{\omega \left( \left( 1-t \right){{e}^{i\theta }}A+t{{e}^{-i\theta }}{{A}^{*}} \right)dt}.\]
In particular, if $A$ is a normal operator, then
\[\left\| A \right\|=\underset{\theta \in \mathbb{R}}{\mathop{\sup }}\,\int\limits_{0}^{1}{\omega \left( \left( 1-t \right){{e}^{i\theta }}A+t{{e}^{-i\theta }}{{A}^{*}} \right)dt}.\]
\end{corollary}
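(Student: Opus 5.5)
The plan is a two-sided squeeze based on Corollary \ref{3}. We apply that corollary to the rotated operator $e^{i\theta}A$ and then take the supremum over $\theta$.

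Fix $\theta\in\mathbb{R}$ and replace $A$ by $e^{i\theta}A$ in Corollary \ref{3}. Since $(e^{i\theta}A)^*=e^{-i\theta}A^*$ and $\omega(e^{i\theta}A)=\omega(A)$, this gives
\[
\left\|\Re\left(e^{i\theta}A\right)\right\|\le \int\limits_{0}^{1}\omega\left((1-t)e^{i\theta}A+te^{-i\theta}A^*\right)dt\le \omega(A).
\]
The right-hand inequality holds for every $\theta$, so the supremum of the integral over $\theta$ is at most $\omega(A)$.

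For the reverse inequality, take the supremum over $\theta$ in the left-hand inequality. By the identity $\omega(A)=\sup_{\theta\in\mathbb{R}}\|\Re(e^{i\theta}A)\|$, already used in the proof of Corollary \ref{4}, we obtain
\[
\omega(A)\le \sup_{\theta\in\mathbb{R}}\int\limits_{0}^{1}\omega\left((1-t)e^{i\theta}A+te^{-i\theta}A^*\right)dt.
\]
Combining the two bounds yields the claimed identity.

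For the normal case, we use the fact that $\omega(A)=\|A\|$ whenever $A$ is normal, and substitute this into the identity just proved.

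No step here is a real obstacle. The only points to check are that the adjoint of $e^{i\theta}A$ is $e^{-i\theta}A^*$, so that Corollary \ref{3} really produces the integrand in the statement, and that the numerical radius is invariant under multiplication by a unimodular scalar.
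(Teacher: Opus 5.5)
Your proof is correct and is essentially the paper's argument: both apply Corollary \ref{3} to $e^{i\theta}A$ to squeeze the integral between $\|\Re(e^{i\theta}A)\|$ and $\omega(A)$, and then take the supremum over $\theta$ using $\omega(A)=\sup_{\theta}\|\Re(e^{i\theta}A)\|$. The paper cites Corollary \ref{4} for this squeeze, but the chain is really Corollary \ref{3} applied to the rotated operator (as in the proof of Corollary \ref{4}), so your citation is the more precise one.
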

\begin{proof}
It follows from Corollary \ref{4} that
	\[\left\| \Re \left( {{e}^{i\theta }}A \right) \right\|\le \int\limits_{0}^{1}{\omega \left( \left( 1-t \right){{e}^{i\theta }}A+t{{e}^{-i\theta }}{{A}^{*}} \right)dt}\le \omega \left( A \right).\]
Since ${{\sup }_{\theta \in \mathbb{R}}}\left\| \Re\left( {{e}^{i\theta }}T \right) \right\|=\omega \left( T \right)$, we obtain the desired result.
\end{proof}
%%%%%%%%%%%%%%%%%%%%%%%%%%%%%%%%%%%%%%%%%%%%%%%%%%%%%%%%%%%%%%%%%%%%%%%%%%%%%%%%%%%%%%%%%%%%%%%%%%%%%%%%%%%%%%%%%%%%%%%%%%%%%%%%%%%%%%%%%%%%%%%%%%%%%%%%%%%%%%%%%%%%%%%%%%%%%%%%%%%%%%%%%%%%%%%%%%%%%%%%%%%%%%%%%%%%%%%%%%%%%%%%%%%%%%%%%%%%%%%%%%%%%%%%%%%%%%%%%%%%%%%%%%%%%%%%%%%%%%%%%%%%%%%%%%%%%%%%%%%%%%%%%%%%%%%%%%%%%%%%%%%%%%%%%%%%%%%%%%%%%%%%%%%%%%%%%%%%%%%%%%%%%%%%%%%%%%%%%%%%%%%%%%%%%%%%%%%%%%%%%%%%%%%%%%%%%%%%%%%%%%%%%%%%%%%%%%%%%%%%%%%%%%%%%%%%%%%%%%%%%%%%%%%%%%%%%%%%%%%%%%%%%%%%%%%%%%%%%%%%%%%%%%%%%%%%%%%%%%%%%%%%%%%%%%%%%%%%%%%%%%%%%%%%%%%%%%%%%%%%%%%%%%%%%%%%%%%%%%%%%%%%%%%%%%%%%%%%%%%%%%%%%%%%%%%%%%%%%%%%%%%%%%%%

The numerical radius is closely related to the real and imaginary parts of an operator, and many of its estimates arise from expressions involving $A$ and $A^*$. While classical bounds control these components separately, they do not reflect the contributions of symmetric and skew-symmetric combinations to the numerical radius. The following corollary provides a new integral estimate that bounds both the symmetric and skew–symmetric interpolations between $A$ and $A^*$ by $\omega(A)$. This unified bound offers additional insight into the structure of the numerical radius and complements its known representations in terms of real and imaginary parts.

\begin{corollary}
Let $A\in \mathbb B\left( \mathbb H \right)$. Then
\[\max \left\{ \int\limits_{0}^{1}{\omega \left( \left( 1-t \right)A-t{{A}^{*}} \right)dt},\int\limits_{0}^{1}{\omega \left( \left( 1-t \right)A+t{{A}^{*}} \right)dt} \right\}\le \omega \left( A \right).\]
\end{corollary}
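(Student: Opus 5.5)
The plan is to bound each of the two integrals separately by $\omega(A)$ and then take the maximum. For the second integral, $\int_0^1 \omega((1-t)A+tA^*)\,dt\le\omega(A)$, nothing new is needed: this is exactly the right-hand inequality of Corollary \ref{3}.

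For the first integral, I would reduce to the same statement applied to a rotated operator. Note that
\[
(1-t)A - tA^* = -i\bigl((1-t)(iA) + t(iA)^*\bigr),
\]
since $(iA)^*=-iA^*$, so that
\[
-i\,t(iA)^* = -i\,t(-i)A^* = -tA^*,
\qquad
-i(1-t)(iA)=(1-t)A .
\]
The numerical radius is invariant under multiplication by a unimodular scalar. Hence
\[
\omega\bigl((1-t)A-tA^*\bigr)=\omega\bigl((1-t)(iA)+t(iA)^*\bigr)
\]
for every $t\in[0,1]$.

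Now apply Corollary \ref{3} to the operator $iA$. This gives
\[
\int_0^1 \omega\bigl((1-t)(iA)+t(iA)^*\bigr)\,dt\le \omega(iA)=\omega(A).
\]
Combining this with the identity above shows that the first integral is also at most $\omega(A)$, and taking the maximum completes the argument.

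Alternatively, one could argue directly with the convexity bound
\[
\omega\bigl((1-t)A-tA^*\bigr)\le (1-t)\omega(A)+t\,\omega(A^*)=\omega(A)
\]
and integrate over $[0,1]$. This is in fact the quickest route, and it also handles the $+$ case.

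There is no real obstacle here. The only point requiring care is the sign bookkeeping in the rotation identity $(iA)^*=-iA^*$, together with the facts $\omega(A^*)=\omega(A)$ and $\omega(\lambda X)=|\lambda|\,\omega(X)$.
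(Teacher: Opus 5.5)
Your proof is correct, but it takes a different route from the paper. For the ``$-$'' case, the paper specializes Theorem \ref{Thm_S_T_Main} to $T=-S^*$. It then collapses the resulting off-diagonal operator matrices using the identities $\omega\left(\begin{bmatrix} O & X\\ -X & O\end{bmatrix}\right)=\omega\left(\begin{bmatrix} O & X\\ X & O\end{bmatrix}\right)=\omega(X)$, and takes the ``$+$'' case from Corollary \ref{3}.

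Your rotation $A\mapsto iA$ replaces all of this block-matrix machinery with one observation: the ``$-$'' interpolation is a unimodular multiple of the ``$+$'' interpolation for $iA$. If you also keep the left half of Corollary \ref{3} for $iA$ and note that $\Re(iA)=-\Im A$, you recover the paper's by-product $\|\Im A\|\le\int_0^1\omega((1-t)A-tA^*)\,dt\le\omega(A)$, the skew analogue of Corollary \ref{3}.

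Your convexity remark goes further. It shows the stated inequality holds pointwise in $t$: by the triangle inequality and $\omega(\pm A^*)=\omega(A)$, each integrand is already at most $\omega(A)$. So the integrals play no role in the upper bound, and the statement as written needs nothing beyond the triangle inequality for $\omega$. What the paper's longer argument (or your rotation argument, kept two-sided) adds is the lower estimate by $\|\Im A\|$, which is not part of the statement.
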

\begin{proof}
In Theorem \ref{Thm_S_T_Main}, if we replace $T$ by $-{{S}^{*}}$, we obtain
	\[\begin{aligned}
   2\left\| \Im S \right\|&=\left\| S-{{S}^{*}} \right\| \\ 
 & \le 2\int\limits_{0}^{1}{\omega \left( \left[ \begin{matrix}
   O & \left( 1-t \right)S-t{{S}^{*}}  \\
   t{{S}^{*}}-\left( 1-t \right)S & O  \\
\end{matrix} \right] \right)dt} \\ 
 & =2\int\limits_{0}^{1}{\omega \left( \left[ \begin{matrix}
   O & \left( 1-t \right)S-t{{S}^{*}}  \\
   -\left( \left( 1-t \right)S-t{{S}^{*}} \right) & O  \\
\end{matrix} \right] \right)dt} \\ 
 & =2\int\limits_{0}^{1}{\omega \left( \left[ \begin{matrix}
   O & \left( 1-t \right)S-t{{S}^{*}}  \\
   \left( 1-t \right)S-t{{S}^{*}} & O  \\
\end{matrix} \right] \right)dt} \\ 
 & =2\int\limits_{0}^{1}{\omega \left( \left( 1-t \right)S-t{{S}^{*}} \right)dt} \\ 
 & \le 2\omega \left( \left[ \begin{matrix}
   O & S  \\
   -S & O  \\
\end{matrix} \right] \right) \\ 
 & =2\omega \left( \left[ \begin{matrix}
   O & S  \\
   S & O  \\
\end{matrix} \right] \right) \\ 
 & =2\omega \left( S \right).  
\end{aligned}\]
Thus, we have shown that
\[\left\| \Im S \right\|\le \int\limits_{0}^{1}{\omega \left( \left( 1-t \right)S-t{{S}^{*}} \right)dt}\le \omega \left( S \right).\]
If we replace $S$ by $A$ in the last inequality, we get 
\[\left\| \Im A \right\|\le \int\limits_{0}^{1}{\omega \left( \left( 1-t \right)A-t{{A}^{*}} \right)dt}\le \omega \left( A \right).\]
Combining this with Corollary \ref{3} implies the desired result.
\end{proof}
%%%%%%%%%%%%%%%%%%%%%%%%%%%%%%%%%%%%%%%%%%%%%%%%%%%%%%%%%%%%%%%%%%%%%%%%%%%%%%%%%%%%%%%%%%%%%%%%%%%%%%%%%%%%%%%%%%%%%%%%%%%%%%%%%%%%%%%%%%%%%%%%%%%%%%%%%%%%%%%%%%%%%%%%%%%%%%%%%%%%%%%%%%%%%%%%%%%%%%%%%%%%%%%%%%%%%%%%%%%%%%%%%%%%%%%%%%%%%%%%%%%%%%%%%%%%%%%%%%%%%%%%%%%%%%%%%%%%%%%%%%%%%%%%%%%%%%%%%%%%%%%%%%%%%%%%%%%%%%%%%%%%%%%%%%%%%%%%%%%%%%%%%%%%%%%%%%%%%%%%%%%%%%%%%%%%%%%%%%%%%%%%%%%%%%%%%%%%%%%%%%%%%%%%%%%%%%%%%%%%%%%%%%%%%%%%%%%%%%%%%%%%%%%%%%%%%%%%%%%%%%%%%%%%%%%%%%%%%%%%%%%%%%%%%%%%%%%%%%%%%%%%%%%%%%%%%%%%%%%%%%%%%%%%%%%%%%%%%%%%%%%%%%%%%%%%%%%%%%%%%%%%%%%%%%%%%%%%%%%%%%%%%%%%%%%%%%%%%%%%%%%%%%%%%%%%%%%%%%%%%%%%%%%%
The following proposition follows immediately from the Hermite-Hadamard inequality applied to the convex function $f(t)=\left\| \left( 1-t \right)A+tB \right\|.$
\begin{proposition}
Let $A,B\in \mathbb B\left( \mathbb H \right)$. Then
\[\left\| A+B \right\|\le 2\int\limits_{0}^{1}{\left\| \left( 1-t \right)A+tB \right\|dt}.\]
\end{proposition}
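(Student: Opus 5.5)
The plan is to apply the Hermite--Hadamard inequality to the function
\[
f(t)=\left\| \left( 1-t \right)A+tB \right\|, \qquad t\in[0,1],
\]
as the paper indicates. The left half of Hermite--Hadamard says that for a convex function $f$ on $[0,1]$ we have $f\left(\tfrac12\right)\le \int_0^1 f(t)\,dt$. Since
\[
f\left(\tfrac12\right)=\left\|\tfrac12 A+\tfrac12 B\right\|=\tfrac12\left\|A+B\right\|,
\]
the conclusion follows once we multiply by $2$. The right half of Hermite--Hadamard also gives $\int_0^1 f(t)\,dt\le \frac{\|A\|+\|B\|}{2}$, which mirrors Theorem \ref{6}, but it is not needed here.

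The key steps, in order, are as follows.
\begin{enumerate}[(i)]
\item \emph{Continuity.} $f$ is continuous, indeed Lipschitz, because $\bigl|f(t)-f(s)\bigr|\le |t-s|\,\|B-A\|$ by the reverse triangle inequality. So the integral exists as a Riemann integral.
\item \emph{Convexity.} For $t,s\in[0,1]$ and $\lambda\in[0,1]$,
\[
(1-(\lambda t+(1-\lambda)s))A+(\lambda t+(1-\lambda)s)B=\lambda\bigl((1-t)A+tB\bigr)+(1-\lambda)\bigl((1-s)A+sB\bigr).
\]
The triangle inequality and homogeneity of the norm then give $f(\lambda t+(1-\lambda)s)\le \lambda f(t)+(1-\lambda)f(s)$.
\item \emph{Midpoint bound.} Rather than quoting Hermite--Hadamard, one can prove it directly by symmetrizing. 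Write
\[
\tfrac12(A+B)=\tfrac12\bigl((1-t)A+tB\bigr)+\tfrac12\bigl(tA+(1-t)B\bigr).
\]
This gives $f(\tfrac12)\le \tfrac12\bigl(f(t)+f(1-t)\bigr)$ for every $t$. Integrating over $[0,1]$ and using $\int_0^1 f(1-t)\,dt=\int_0^1 f(t)\,dt$ yields $\tfrac12\|A+B\|\le \int_0^1 f(t)\,dt$.
\end{enumerate}

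I do not expect any real obstacle. The only point needing care is step (iii): the pointwise symmetrization and the change of variables $t\mapsto 1-t$ must be justified, and both are immediate from step (i). The argument is the norm analogue of the proof of Theorem \ref{6}, with the scalar inequality \eqref{1} replaced by convexity of the norm.
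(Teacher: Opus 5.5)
Your proposal is correct and follows the paper's route: the paper simply invokes the left Hermite--Hadamard inequality for the convex function $f(t)=\|(1-t)A+tB\|$ at $t=\tfrac12$. Your step (iii) proves that midpoint bound directly, using the symmetrization $\tfrac12(A+B)=\tfrac12\bigl((1-t)A+tB\bigr)+\tfrac12\bigl(tA+(1-t)B\bigr)$ and the substitution $t\mapsto 1-t$; this is the standard proof of that inequality and makes your argument self-contained.
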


\begin{lemma}\label{5}
Let $S,T\in \mathbb B\left( \mathbb H \right)$. Then
\[\left\| S+T \right\|\le 2\min \left\{ \int\limits_{0}^{1}{\omega \left( \left[ \begin{matrix}
   O & \left( 1-t \right)S+tT  \\
   t{{S}^{*}}+\left( 1-t \right){{T}^{*}} & O  \\
\end{matrix} \right] \right)dt},\int\limits_{0}^{1}{\left\| \left( 1-t \right)S+tT \right\|dt} \right\}.\]
\end{lemma}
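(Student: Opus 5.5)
The plan is to prove the two bounds separately and then take the minimum. Since $\|S+T\|$ is dominated by each of the two quantities, it is dominated by their minimum.

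The first bound is exactly the left inequality of Theorem \ref{Thm_S_T_Main}:
\[\left\| S+T \right\|\le 2\int\limits_{0}^{1}{\omega \left( \left[ \begin{matrix} O & \left( 1-t \right)S+tT \\ t{{S}^{*}}+\left( 1-t \right){{T}^{*}} & O \\ \end{matrix} \right] \right)dt}.\]
I will simply quote it with $S,T$ as given.

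The second bound is the preceding Proposition with $A=S$ and $B=T$, namely
\[\left\| S+T \right\|\le 2\int\limits_{0}^{1}{\left\| \left( 1-t \right)S+tT \right\|dt}.\]
For completeness, I would justify it directly as follows. The function $f(t)=\|(1-t)S+tT\|$ is convex on $[0,1]$, being a norm composed with an affine map. Convexity gives $f(1/2)\le \frac{1}{2}\left(f(t)+f(1-t)\right)$ for every $t$. Integrating over $[0,1]$ and using the symmetry $\int_0^1 f(1-t)\,dt=\int_0^1 f(t)\,dt$ yields $\frac12\|S+T\|=f(1/2)\le \int_0^1 f(t)\,dt$. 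This is the left half of the Hermite--Hadamard inequality. Measurability of the integrands is not an issue, since both are continuous in $t$: norms and numerical radii are Lipschitz with respect to the operator norm.

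Combining the two displays gives $\|S+T\|\le 2\min\{\cdot,\cdot\}$. There is no real obstacle here. The only point requiring care is the convexity and symmetry step in the Hermite--Hadamard argument, which is routine.
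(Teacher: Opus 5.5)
Your proposal is correct and is essentially the paper's argument: the lemma is the conjunction of the left inequality of Theorem \ref{Thm_S_T_Main} and the Hermite--Hadamard proposition stated just before the lemma, assembled exactly as you do. The paper's own one-line proof only points to running inequality \eqref{1} through inner products ``as in the proof of Theorem \ref{6}''; for the norm term that is the same convexity idea at the scalar level, so your symmetrization $f(1/2)\le\frac12\bigl(f(t)+f(1-t)\bigr)$ is a cleaner, explicit version of the same route.
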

\begin{proof}
Let $a=\left\langle Ax,x \right\rangle $ and $b=\left\langle Bx,x \right\rangle $, in \eqref{1}. By applying the same method as in the proof of Theorem \ref{6}, we get the desired result.
\end{proof}
Classical estimates for sums of operators typically treat the operator norm and the numerical radius separately. For instance, the triangle inequality gives $\|S+T\|\le \|S\|+\|T\|$, while numerical radius bounds are often derived through associated operator matrices. However, these estimates do not quantify the gap between norm-based and numerical-radius-based bounds. The following theorem provides a refined inequality that simultaneously incorporates both quantities through an integral averaging process. By comparing the numerical radius of block operator matrices with the norms of convex combinations of $S$ and $T$, the result yields a sharper estimate for $\|S+T\|$. It offers a unified framework that bridges norm inequalities and numerical radius bounds.

\begin{theorem}\label{10}
Let $S,T\in \mathbb B\left( \mathbb H \right)$. Then
\[\begin{aligned}
  & \left\| S+T \right\|+\left| \int\limits_{0}^{1}{\left( \omega \left( \begin{matrix}
   O & \left( 1-t \right)S+tT  \\
   t{{S}^{*}}+\left( 1-t \right){{T}^{*}} & O  \\
\end{matrix} \right)-\left\| \left( 1-t \right)S+tT \right\| \right)dt} \right| \\ 
 & \le \int\limits_{0}^{1}{\left( \omega \left( \begin{matrix}
   O & \left( 1-t \right)S+tT  \\
   t{{S}^{*}}+\left( 1-t \right){{T}^{*}} & O  \\
\end{matrix} \right)+\left\| \left( 1-t \right)S+tT \right\| \right)dt} \\ 
 & \le \left\| S \right\|+\left\| T \right\|.
\end{aligned}\]
\end{theorem}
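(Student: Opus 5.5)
The plan is to combine Lemma \ref{5} with the elementary identity $\min\{x,y\}=\frac{x+y-|x-y|}{2}$. I will abbreviate
\[
f(t)=\omega\left(\begin{bmatrix} O & (1-t)S+tT\\ tS^*+(1-t)T^* & O\end{bmatrix}\right),\qquad g(t)=\left\|(1-t)S+tT\right\|,
\]
and set $X=\int_0^1 f(t)\,dt$ and $Y=\int_0^1 g(t)\,dt$. Both integrands are continuous in $t$ (in fact convex), so the integrals are well defined.

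\emph{First inequality.} Lemma \ref{5} gives
\[
\|S+T\|\le 2\min\{X,Y\}=X+Y-|X-Y|.
\]
Moving $|X-Y|$ to the left-hand side and using linearity of the integral, $X-Y=\int_0^1\big(f(t)-g(t)\big)\,dt$, which is exactly the left inequality of the theorem.

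\emph{Second inequality.} It suffices to show $X+Y\le\|S\|+\|T\|$.
\begin{itemize}
\item For $Y$: convexity of the norm gives $g(t)\le(1-t)\|S\|+t\|T\|$. Integrating yields $Y\le\frac{\|S\|+\|T\|}{2}$.
\item For $X$: write the block matrix as $(1-t)A+tA^*$, with $A=\begin{bmatrix}O&S\\T^*&O\end{bmatrix}$, as in the proof of Theorem \ref{Thm_S_T_Main}. Since $\omega$ is a norm, $f(t)\le(1-t)\omega(A)+t\,\omega(A^*)=\omega(A)$, so $X\le\omega(A)$.
\item It remains to see $\omega(A)\le\frac{\|S\|+\|T\|}{2}$. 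Decompose $A=\begin{bmatrix}O&S\\O&O\end{bmatrix}+\begin{bmatrix}O&O\\T^*&O\end{bmatrix}$. Use the known fact that $\omega\left(\begin{bmatrix}O&S\\O&O\end{bmatrix}\right)=\frac{\|S\|}{2}$, and similarly the second summand has numerical radius $\frac{\|T^*\|}{2}=\frac{\|T\|}{2}$. The triangle inequality for $\omega$ then gives the claim.
\end{itemize}
Combining these, $X+Y\le\frac{\|S\|+\|T\|}{2}+\frac{\|S\|+\|T\|}{2}=\|S\|+\|T\|$, which completes the chain.

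\emph{Main obstacle.} The only step not already in the paper is the bound on $X$, specifically the value $\frac{\|S\|}{2}$ for the numerical radius of a nilpotent off-diagonal block. If I wanted to avoid citing it, I would prove it directly: for a unit vector $(x,y)$, $|\langle Sy,x\rangle|\le\|S\|\,\|x\|\,\|y\|\le\frac{\|S\|}{2}$, because $\|x\|^2+\|y\|^2=1$ forces $\|x\|\,\|y\|\le\frac12$. Everything else is routine integration of convexity bounds.
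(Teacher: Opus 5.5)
Your proof is correct and follows the paper's argument. The first inequality comes from Lemma \ref{5} together with $2\min\{X,Y\}=X+Y-|X-Y|$, and the second from bounding each integral by $\frac{1}{2}(\|S\|+\|T\|)$. The one cosmetic difference is in bounding $X$: the paper applies $\omega\left(\begin{bmatrix}O&B\\C&O\end{bmatrix}\right)\le\frac12(\|B\|+\|C\|)$ pointwise in $t$ and then uses convexity of the norm. You instead first pass through $X\le\omega(A)$ (the right half of Theorem \ref{Thm_S_T_Main}) and then prove that same off-diagonal bound for $A$ by splitting it into two nilpotent pieces.
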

\begin{proof}
By Lemma \ref{5}, we have
\[\begin{aligned}
  & \left\| S+T \right\| \\ 
 & \le \int\limits_{0}^{1}{\omega \left( \left[ \begin{matrix}
   O & \left( 1-t \right)S+tT  \\
   t{{S}^{*}}+\left( 1-t \right){{T}^{*}} & O  \\
\end{matrix} \right] \right)dt}+\int\limits_{0}^{1}{\left\| \left( 1-t \right)S+tT \right\|dt} \\ 
 &\quad -\left| \int\limits_{0}^{1}{\omega \left( \left[ \begin{matrix}
   O & \left( 1-t \right)S+tT  \\
   t{{S}^{*}}+\left( 1-t \right){{T}^{*}} & O  \\
\end{matrix} \right] \right)dt}-\int\limits_{0}^{1}{\left\| \left( 1-t \right)S+tT \right\|dt} \right|.
\end{aligned}\]
So,
\begin{equation}\label{7}
\begin{aligned}
  & \left\| S+T \right\|+\left| \int\limits_{0}^{1}{\omega \left( \left[ \begin{matrix}
   O & \left( 1-t \right)S+tT  \\
   t{{S}^{*}}+\left( 1-t \right){{T}^{*}} & O  \\
\end{matrix} \right] \right)dt}-\int\limits_{0}^{1}{\left\| \left( 1-t \right)S+tT \right\|dt} \right| \\ 
 & \le \int\limits_{0}^{1}{\omega \left( \left[ \begin{matrix}
   O & \left( 1-t \right)S+tT  \\
   t{{S}^{*}}+\left( 1-t \right){{T}^{*}} & O  \\
\end{matrix} \right] \right)dt}+\int\limits_{0}^{1}{\left\| \left( 1-t \right)S+tT \right\|dt}. 
\end{aligned}
\end{equation}
We know that
\[\begin{aligned}
  & \omega \left( \left[ \begin{matrix}
   O & \left( 1-t \right)S+tT  \\
   t{{S}^{*}}+\left( 1-t \right){{T}^{*}} & O  \\
\end{matrix} \right] \right) \\ 
 & \le \frac{1}{2}\left( \left\| \left( 1-t \right)S+tT \right\|+\left\| t{{S}^{*}}+\left( 1-t \right){{T}^{*}} \right\| \right) \\ 
 & \le \frac{1}{2}\left( \left( 1-t \right)\left\| S \right\|+t\left\| T \right\|+t\left\| {{S}^{*}} \right\|+\left( 1-t \right)\left\| {{T}^{*}} \right\| \right) \\ 
 & =\frac{1}{2}\left( \left\| S \right\|+\left\| T \right\| \right).  
\end{aligned}\]
Thus,
\begin{equation}\label{8}
\int\limits_{0}^{1}{\omega \left( \left[ \begin{matrix}
   O & \left( 1-t \right)S+tT  \\
   t{{S}^{*}}+\left( 1-t \right){{T}^{*}} & O  \\
\end{matrix} \right] \right)dt}\le \frac{1}{2}\left( \left\| S \right\|+\left\| T \right\| \right).
\end{equation}
We also have
\[\left\| \left( 1-t \right)S+tT \right\|\le \left( 1-t \right)\left\| S \right\|+t\left\| T \right\|.\]
Hence
\begin{equation}\label{9}
\int\limits_{0}^{1}{\left\| \left( 1-t \right)S+tT \right\|dt}\le \int\limits_{0}^{1}{\left( \left( 1-t \right)\left\| S \right\|+t\left\| T \right\| \right)dt}=\frac{1}{2}\left( \left\| S \right\|+\left\| T \right\| \right).
\end{equation}
Now, combining two inequalities \eqref{8} and \eqref{9} together with \eqref{7} implies the desired result.
\end{proof}
%%%%%%%%%%%%%%%%%%%%%%%%%%%%%%%%%%%%%%%%%%%%%%%%%%%%%%%%%%%%%%%%%%%%%%%%%%%%%%%%%%%%%%%%%%%%%%%%%%%%%%%%%%%%%%%%%%%%%%%%%%%%%%%%%%%%%%%%%%%%%%%%%%%%%%%%%%%%%%%%%%%%%%%%%%%%%%%%%%%%%%%%%%%%%%%%%%%%%%%%%%%%%%%%%%%%%%%%%%%%%%%%%%%%%%%%%%%%%%%%%%%%%%%%%%%%%%%%%%%%%%%%%%%%%%%%%%%%%%%%%%%%%%%%%%%%%%%%%%%%%%%%%%%%%%%%%%%%%%%%%%%%%%%%%%%%%%%%%%%%%%%%%%%%%%%%%%%%%%%%%%%%%%%%%%%%%%%%%%%%%%%%%%%%%%%%%%%%%%%%%%%%%%%%%%%%%%%%%%%%%%%%%%%%%%%%%%%%%%%%%%%%%%%%%%%%%%%%%%%%%%%%%%%%%%%%%%%%%%%%%%%%%%%%%%%%%%%%%%%%%%%%%%%%%%%%%%%%%%%%%%%%%%%%%%%%%%%%%%%%%%%%%%%%%%%%%%%%%%%%%%%%%%%%%%%%%%%%%%%%%%%%%%%%%%%%%%%%%%%%%%%%%%%%%%%%%%%%%%%%%%%%%%%%

The following corollary provides a refined lower bound for the numerical radius by incorporating an integral comparison between the numerical radius of an associated operator matrix and the norm of convex combinations of the real and imaginary parts of $A$. This provides an interesting refinement of the celebrated bound $\frac{1}{2}\|A\|\leq \omega(A).$

\begin{corollary}\label{11}
Let $A\in \mathbb B\left( \mathbb H \right)$. Then
{\small
\[\frac{1}{2}\left\| A \right\|+\frac{1}{2}\left| \int\limits_{0}^{1}{\left( \omega \left( \begin{matrix}
   O & \left( 1-t \right)\Re A+it\Im A  \\
   t\Re A+i\left( 1-t \right){\Im A} & O  \\
\end{matrix} \right)-\left\| \left( 1-t \right)\Re A+it\Im A \right\| \right)dt} \right|\le \omega \left( A \right).\]
}
\end{corollary}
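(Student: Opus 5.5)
Apply Theorem \ref{10} with $S=\Re A$ and $T=i\Im A$. Then $S+T=\Re A+i\Im A=A$. Since $\Re A$ and $\Im A$ are self-adjoint, $S^*=\Re A$ and $T^*=-i\Im A$.

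Compute the lower-left entry of the operator matrix in Theorem \ref{10}:
\[
tS^*+(1-t)T^*=t\Re A-i(1-t)\Im A.
\]
The statement of Corollary \ref{11} has $t\Re A+i(1-t)\Im A$ there instead. I would handle this sign discrepancy in one of two ways.

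\begin{itemize}
\item Use $T=-i\Im A$ and apply the theorem to $\Re A-i\Im A=A^*$, using $\|A^*\|=\|A\|$ and $\omega(A^*)=\omega(A)$. The upper-right entry then becomes $(1-t)\Re A-it\Im A$, so this does not match exactly either.
\item Conjugate by the unitary $U=I\oplus(-I)$ or a similar diagonal unitary, which preserves the numerical radius of the block matrix. With $U=I\oplus -I$, the matrix $\begin{bmatrix} O&X\\ Y&O\end{bmatrix}$ is sent to $\begin{bmatrix} O&-X\\ -Y&O\end{bmatrix}$, which has the same $\omega$. This alone does not flip only the $\Im$ part.
\end{itemize}

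So I expect the cleanest route is simply to take Theorem \ref{10} as stated, with $S=\Re A$, $T=i\Im A$, and read the matrix entry $tS^*+(1-t)T^*$ literally. If the displayed form must be matched exactly, I would still try to justify the $\omega$ equality by a suitable unitary similarity or the adjoint operation. Using $\omega(M^*)=\omega(M)$ swaps and conjugates the off-diagonal entries, which I would try to combine with the substitution $t\mapsto 1-t$ in the integral. I expect this bookkeeping to be the main, though minor, obstacle.

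Next, bound the right-hand side of Theorem \ref{10}:
\[
\|S\|+\|T\|=\|\Re A\|+\|\Im A\|.
\]
Both $\|\Re A\|\le\omega(A)$ and $\|\Im A\|\le\omega(A)$ are standard; indeed $\|\Re(e^{i\theta}A)\|\le\omega(A)$ for every $\theta$. Hence the right side is at most $2\omega(A)$.

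Combining this with the first inequality of Theorem \ref{10} gives
\[
\|A\|+\Big|\int_0^1(\omega(\cdot)-\|(1-t)\Re A+it\Im A\|)\,dt\Big|\le 2\omega(A).
\]
Dividing by $2$ yields the claim. This also refines $\tfrac12\|A\|\le\omega(A)$, since the absolute-value term is nonnegative.
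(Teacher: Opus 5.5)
Your route is the paper's: apply Theorem~\ref{10} with $S=\Re A$ and $T=i\Im A$, then use $\|\Re A\|+\|\Im A\|\le 2\omega(A)$. Your computation $tS^*+(1-t)T^*=t\Re A-i(1-t)\Im A$ is correct. The paper's proof gets the printed entry only by writing $tS^*+i(1-t)T^*$, which silently takes $(iT)^*=iT^*$.

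\textbf{Where the proposal goes wrong.} You treat the mismatch as minor bookkeeping that a unitary similarity, or the adjoint combined with $t\mapsto 1-t$, could fix. No such fix exists, because the corollary as printed is false.

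\emph{Why your suggested fixes cannot work.} Put $X_t=(1-t)\Re A+it\Im A$.
\begin{itemize}
\item The literal matrix is $\begin{bmatrix} O & X_t\\ X_{1-t}^* & O\end{bmatrix}$, and its adjoint is the same matrix at $1-t$. So adjoint plus reparametrization maps this family to itself.
\item The printed matrix is $\begin{bmatrix} O & X_t\\ X_{1-t} & O\end{bmatrix}$, a different family.
\item Already at $t=\tfrac12$ the two are $\tfrac12\begin{bmatrix} O & A\\ A^* & O\end{bmatrix}$ and $\tfrac12\begin{bmatrix} O & A\\ A & O\end{bmatrix}$. Their numerical radii are $\tfrac12\|A\|$ and $\tfrac12\omega(A)$, which differ whenever $\omega(A)<\|A\|$.
\end{itemize}

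\emph{A concrete counterexample.} Take $A=\begin{bmatrix} 0&1\\0&0\end{bmatrix}$, so $\|A\|=1$ and $\omega(A)=\tfrac12$.
\begin{itemize}
\item We have $X_t=\tfrac12\begin{bmatrix}0&1\\1-2t&0\end{bmatrix}$, hence $\|X_t\|=\tfrac12$ for all $t\in[0,1]$.
\item The printed $4\times 4$ matrix is permutation-similar to the direct sum of $\tfrac12\begin{bmatrix}0&1\\2t-1&0\end{bmatrix}$ and $\tfrac12\begin{bmatrix}0&1-2t\\1&0\end{bmatrix}$.
\item Using $\omega\begin{bmatrix}0&\alpha\\ \beta&0\end{bmatrix}=\tfrac12(|\alpha|+|\beta|)$, its numerical radius is $\tfrac14(1+|1-2t|)$.
\end{itemize}
Hence
\[
\frac12\|A\|+\frac12\left|\int_0^1\Big(\frac{1+|1-2t|}{4}-\frac12\Big)dt\right|=\frac12+\frac12\left|\frac38-\frac12\right|=\frac{9}{16}>\frac12=\omega(A).
\]
The step that breaks is the first inequality of Theorem~\ref{10}, $\|A\|\le 2\int_0^1\omega(\cdot)\,dt$, since $1>2\cdot\tfrac38$. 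With the literal entry $t\Re A-i(1-t)\Im A$, the block matrix has numerical radius $\tfrac12$ for every $t$, and one gets equality $\tfrac12=\tfrac12$.

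\textbf{What to do.} Stop trying to match the display. Read literally, your argument is complete. It proves the corollary with lower-left entry $t\Re A-i(1-t)\Im A$, and that is the statement that should be recorded. The same correction is needed in the paper's subsequent $A^2=O$ corollary, which also fails as printed for this $A$ (it would assert $\tfrac38=\tfrac12$).
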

\begin{proof}
It follows from Theorem \ref{10} that
\[\begin{aligned}
  & \left\| S+iT \right\|+\left| \int\limits_{0}^{1}{\left( \omega \left( \begin{matrix}
   O & \left( 1-t \right)S+itT  \\
   t{{S}^{*}}+i\left( 1-t \right){{T}^{*}} & O  \\
\end{matrix} \right)-\left\| \left( 1-t \right)S+itT \right\| \right)dt} \right| \\ 
 & \le \int\limits_{0}^{1}{\left( \omega \left( \begin{matrix}
   O & \left( 1-t \right)S+itT  \\
   t{{S}^{*}}+i\left( 1-t \right){{T}^{*}} & O  \\
\end{matrix} \right)+\left\| \left( 1-t \right)S+itT \right\| \right)dt} \\ 
 & \le \left\| S \right\|+\left\| T \right\|. 
\end{aligned}\]
If we let $S=\Re A$ and $T=\Im A$, we get
\[\begin{aligned}
  & \left\| A \right\|+\left| \int\limits_{0}^{1}{\left( \omega \left( \begin{matrix}
   O & \left( 1-t \right)\Re A+it\Im A  \\
   t\Re A+i\left( 1-t \right){\Im A} & O  \\
\end{matrix} \right)-\left\| \left( 1-t \right)\Re A+it\Im A \right\| \right)dt} \right| \\ 
 & \le \int\limits_{0}^{1}{\left( \omega \left( \begin{matrix}
   O & \left( 1-t \right)\Re A+it\Im A  \\
   t\Re A+i\left( 1-t \right)\Im A & O  \\
\end{matrix} \right)+\left\| \left( 1-t \right)\Re A+it\Im A \right\| \right)dt} \\ 
 & \le \left\| \Re A \right\|+\left\| \Im A \right\| \\ 
 & \le 2\omega \left( A \right), 
\end{aligned}\]
as required.
\end{proof}
The following corollary identifies a situation in which the refined bound becomes exact, showing that for operators satisfying $A^{2}=O$ the numerical-radius-based expression coincides with the corresponding norm average, thereby revealing the sharpness of the preceding inequality.

\begin{corollary}
Let $A\in \mathbb B\left( \mathbb H \right)$. If ${{A}^{2}}=O$, then
\[\int\limits_{0}^{1}{\omega \left( \begin{matrix}
   O & \left( 1-t \right)\Re A+it\Im A  \\
   t\Re A+i\left( 1-t \right)\Im A & O  \\
\end{matrix} \right)dt}=\int\limits_{0}^{1}{\left\| \left( 1-t \right)\Re A+it\Im A \right\|dt}.\]
\end{corollary}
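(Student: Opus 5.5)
The approach is to reduce to a concrete model of a square-zero operator and to compare the two integrands pointwise in $t$. Since $A^2=O$, we have $\operatorname{ran}A\subseteq\ker A$. Put $M=\overline{\operatorname{ran}A}$ and decompose $\mathbb H=M\oplus M^{\perp}$. Then $A$ vanishes on $M$ and maps $M^\perp$ into $M$, so
\[A=\left[\begin{matrix} O & C\\ O & O\end{matrix}\right],\qquad \|C\|=\|A\|.\]
Write $s=1-2t\in[-1,1]$ and use $\Re A=\frac{A+A^*}{2}$, $i\Im A=\frac{A-A^*}{2}$. Then
\[X_t:=(1-t)\Re A+it\Im A=\tfrac12(A+sA^*),\qquad Y_t:=t\Re A+i(1-t)\Im A=\tfrac12(A-sA^*).\]
In the decomposition these are $\frac12\left[\begin{matrix} O & C\\ \pm sC^* & O\end{matrix}\right]$. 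Hence $\|X_t\|=\|Y_t\|=\frac12\max\{1,|s|\}\|C\|=\frac12\|A\|$ for every $t$, so the right-hand side of the claimed identity equals $\frac12\|A\|$.

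The planned route to equality has two steps.

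\textbf{Upper bound.} The estimate $\omega\left(\left[\begin{matrix} O & X\\ Y & O\end{matrix}\right]\right)\le\frac12(\|X\|+\|Y\|)$ from the proof of Theorem \ref{10} gives $\omega(\cdot)\le\frac12\|A\|$ pointwise in $t$.

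\textbf{Lower bound.} I would use invariance of $\omega$ under conjugation by $\operatorname{diag}(I,e^{i\varphi}I)$ and under multiplication by unimodular scalars. Together with Theorem \ref{Thm_S_T_Main}, this gives $2\omega\ge\|\alpha X_t+\beta Y_t^*\|$ for all $|\alpha|=|\beta|=1$, and I would try to show that this lower bound reaches $\frac12\|A\|$.

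This lower bound is the main obstacle, and carrying out the computation shows it fails. Regroup the four components of $\mathbb H\oplus\mathbb H=(M\oplus M^\perp)\oplus(M\oplus M^\perp)$. The block operator then splits as a direct sum of two copies of operators of the form
\[\tfrac12\left[\begin{matrix} O & C\\ \mp sC^* & O\end{matrix}\right],\]
whose numerical radius is $\frac14(1+|s|)\|C\|$. This is already visible for the scalar $2\times2$ model $\left[\begin{matrix}0&c\\ -sc&0\end{matrix}\right]$.

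A concrete check: take $A=\left[\begin{matrix}0&1\\0&0\end{matrix}\right]$ and $t=\frac12$. Then $X_{1/2}=Y_{1/2}=\frac12A$, so the block matrix has numerical radius $\omega\left(\frac12A\right)=\frac14$, while $\|X_{1/2}\|=\frac12$. Integrating over $t$ gives
\[\int_0^1\frac{1+|1-2t|}{4}\|A\|\,dt=\frac38\|A\|\neq\frac12\|A\|.\]
So, as literally stated, with $t\Re A+i(1-t)\Im A$ in the lower-left corner and no adjoint, the identity appears to be false.

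My plan is therefore to present the reduction above and verify the $2\times2$ example, which settles the matter. I would then determine which variant the authors likely intended. The natural candidate is the adjoint form $tS^*+i(1-t)T^*$ that appears in the proof of Corollary \ref{11}. With $S=\Re A$ and $T=\Im A$ self-adjoint, this replaces $Y_t$ by $t\Re A-i(1-t)\Im A$, and I would redo the same block computation for it to test whether pointwise equality $\omega=\|X_t\|$ holds. For that version I would again expect the direct-sum splitting to be the decisive step.
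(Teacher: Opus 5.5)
Your proposal is correct: the identity as printed is false, and the paper's proof does not establish it. With $s=1-2t$, the left integrand is $\frac{1+|s|}{4}\|A\|$ and the right integrand is $\frac12\|A\|$. The two sides are therefore $\frac38\|A\|$ and $\frac12\|A\|$, so the identity fails for every nonzero $A$ with $A^2=O$.

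Your single check at $t=\frac12$ already suffices. The block integrand is continuous in $t$ and bounded pointwise by $\frac12(\|X_t\|+\|Y_t\|)=\|X_t\|$. A strict inequality at one point therefore forces a strict inequality of the integrals.

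The paper's proof uses $\omega(A)=\frac12\|A\|$ to squeeze the absolute-value term of Corollary \ref{11} to zero. However, Corollary \ref{11} is itself misstated. It applies Theorem \ref{10} to the pair $(S,iT)$, whose lower-left entry is $tS^*+(1-t)(iT)^*=tS^*-i(1-t)T^*$. The paper writes $+i(1-t)T^*$ instead. Your $2\times 2$ example refutes the printed Corollary \ref{11} as well: its left side equals $\frac12+\frac12\cdot\frac18=\frac{9}{16}>\frac12=\omega(A)$.

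There is one slip in your last paragraph. The form $tS^*+i(1-t)T^*$ in the proof of Corollary \ref{11} is exactly the erroneous one. With $S=\Re A$ and $T=\Im A$ self-adjoint, it returns the printed block unchanged. The corrected entry $tS^*-i(1-t)T^*$ gives $t\Re A-i(1-t)\Im A=\frac12(A^*-sA)=Y_t^*$. That is the expression you wrote down, but it comes from fixing the sign, not from that form.

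For the corrected statement, your splitting closes the argument. Under the same regrouping of $\mathbb H\oplus\mathbb H$, the block is the direct sum of $\frac12\begin{bmatrix} O & C\\ C^* & O\end{bmatrix}$, which is self-adjoint, and $\frac{s}{2}\begin{bmatrix} O & C^*\\ -C & O\end{bmatrix}$, which is skew-adjoint. Their numerical radii are $\frac12\|C\|$ and $\frac{|s|}{2}\|C\|$. Hence the integrand equals $\frac12\|A\|=\|X_t\|$ for every $t$, which is stronger than the integrated identity.

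Once the sign is fixed, the paper's route also works, via
\[
\|A\|\le\cdots\le\|\Re A\|+\|\Im A\|\le 2\omega(A)=\|A\|.
\]
It is shorter and relies only on the known fact $\omega(A)=\frac12\|A\|$. But it sees only the integrals, and a sign error in a block entry passes through it unnoticed. Your concrete model computes the actual values, and that is what exposes the error.
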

\begin{proof}
We know that if ${{A}^{2}}=O$, then $\omega \left( A \right)=\frac{1}{2}\left\| A \right\|$. Thus, from Corollary \ref{11}, we get the desired result.
\end{proof}

\subsection*{Declarations}

\begin{itemize}
\item \textbf{Availability of data and materials}: Not applicable.

\item \textbf{Competing interests}: The authors declare that they have no
competing interests.

\item \textbf{Funding}: Not applicable.

\item \textbf{Authors' contributions}: Authors declare they have contributed
equally to this paper. All authors have read and approved this version.
\end{itemize}

\vskip 0.5 true cm 	

{\tiny (S. Sheybani) Department of Mathematics, Ma.C., Islamic Azad University, Mashhad, Iran 
	
\textit{E-mail address:} shiva.sheybani95@gmail.com}

\vskip 0.3 true cm 	

{\tiny (H. R. Moradi) Department of Mathematics, Ma.C., Islamic Azad University, Mashhad, Iran 
	
\textit{E-mail address:} hrmoradi@mshdiau.ac.ir}

\vskip 0.3 true cm 	

{\tiny (M. Sababheh) Department of Basic Sciences, Princess Sumaya University for Technology, Amman, Jordan
	
\textit{E-mail address:} sababheh@yahoo.com}

%########################


\begin{thebibliography}{99}
\bibitem{Abu-Omar_Rocky_2015} 
A. Abu-Omar, F. Kittaneh, {\it Upper and lower bounds for the numerical radius with an application to involution operators}, Rocky Mountain J. Math. {45}(4) (2015), 1055--1065.

\bibitem{Abu-Omar_LAA_2015} 
A. Abu-Omar, F. Kittaneh, {\it Numerical radius inequalities for $n\times n$ operator matrices}, Linear Algebra Appl. 468 (2015), 18--26.

\bibitem{Gustafson_Book_1997}
K. E. Gustafson, D. K. M. Rao, {\it Numerical range}, Springer, New York, 1997.

\bibitem{Hirzallah_IEOT_2011}
O. Hirzallah, F. Kittaneh, and K. Shebrawi, {\it Numerical radius inequalities for certain $2 \times 2$ operator matrices}, Integral Equ. Oper. Theory. {71} (2011), 129--147. 

\bibitem{Hirzallah_MathScandin_2014} 
O. Hirzallah, F. Kittaneh,  {\it Numerical Radius Inequalities for Several Operators}, Math. Scand. 114(1) (2014), 110--119.

\bibitem{Hirzallah_NFAO_2011}
O. Hirzallah, F. Kittaneh, and K. Shebrawi, {\it Numerical radius inequalities for commutators of Hilbert space operators}, Numer. Funct. Anal. Optim. 32(7) (2011), 739--749.

\bibitem{Kittaneh_Studia_2003}
F. Kittaneh, {\it A numerical radius inequality and an estimate for the numerical radius of the Frobenius companion matrix}, Studia Math. {158}(1) (2003), 11--17.

\bibitem{Kittaneh_Studia_2005}
F. Kittaneh, {\it Numerical radius inequalities for Hilbert space operators}, Studia Math. {168}(1) (2005), 73--80.

\bibitem{Kittaneh_LAA_2015}
F. Kittaneh, M. S. Moslehian, and T. Yamazaki, {\it Cartesian decomposition and numerical radius inequalities}, Linear Algebra Appl. {471} (2015), 46--53. 

\bibitem{Moradi_LAMA_2021} 
H. R. Moradi, M. Sababheh,  {\it More accurate numerical radius inequalities II}, Linear Multilinear Algebra. {69}(5) (2021), 921--933.

\bibitem{Moslehian_MathScand_2017} 
M. S. Moslehian, M. Sattari, and K. Shebrawi, {\it Extensions of Euclidean operator radius inequalities}, Math. Scand. {120}(1) (2017), 129--144.

\bibitem{Sababheh_LAMA_2021}
M. Sababheh, H. R. Moradi, \textit{More accurate numerical radius inequalities (I)}, Linear Multilinear Algebra. {69}(10) (2021), 1964--1973.

\bibitem{1}
M. Sababheh, S. Furuichi, and H. R. Moradi, {\it Operator inequalities via the triangle inequality}, J. Math. Inequal. 18(2) (2024), 631--642.

\bibitem{Sattari_LAA_2015}
M. Sattari, M. S. Moslehian, and T. Yamazaki, {\it Some generalized numerical radius inequalities for Hilbert space operators}, Linear Algebra Appl. {470} (2015), 216--227.


\end{thebibliography}
\end{document}